\numberwithin{equation}{section}
\def\R{\mathbb R}
\def\Z{\mathbb Z}
\def\N{\mathbb N}
\def\E{\mathbb E}
\def\ee{\varepsilon}
\def\gcd{\operatorname{gcd}}
\DeclarePairedDelimiter\floor{\lfloor}{\rfloor}
\def\le {\leqslant}
\def\ge {\geqslant}
\newtheorem{theorem}{Theorem}[section]
\newtheorem{lemma}[theorem]{Lemma}
\newtheorem{proposition}[theorem]{Proposition}
\theoremstyle{definition}
\newtheorem*{remark}{Remark}
\newtheorem{question}[theorem]{Question}
\numberwithin{equation}{section}
\theoremstyle{remark}
\newcommand{\defeq}{:=}
\newcommand{\maps}{\colon}
\newcommand{\mcal}{\mathcal}
\def\E{\mathbb E}
\def\Q{\mathbb Q}
\title{Average sizes of mixed character sums}
\author{Victor Y. Wang}
\address{IST Austria, Am Campus 1, 3400 Klosterneuburg, Austria}
\email{vywang@alum.mit.edu}
\author{Max Wenqiang Xu}
\address{Courant Institute, 251 Mercer Street, New York 10012, USA}
\email{maxxu1729@gmail.com}
\subjclass{Primary 11L40; Secondary 11D09, 11K65, 11N37}
\dedicatory{To Melvyn Nathanson and Carl Pomerance on the occasion of their eightieth birthdays}
\begin{document}

\begin{abstract}
We prove that the average size of a mixed character sum
$$\sum_{1\le n \le x}  \chi(n) e(n\theta) w(n/x)$$
(for a suitable smooth function $w$)
is on the order of $\sqrt{x}$ for all irrational real $\theta$ satisfying a weak Diophantine condition, where $\chi$ is drawn from the family of Dirichlet characters modulo a large prime $r$ and where $x\le r$.
In contrast, it was proved by Harper that the average size is $o(\sqrt{x})$ for rational $\theta$.
Certain quadratic Diophantine equations
play a key role in the present paper.
\end{abstract}

\maketitle

\section{Introduction}
Throughout this paper, let $r$ be a large prime.
In a recent breakthrough work of Harper\cite{harpertypical}, it is established that the typical size of Dirichlet character sum $\sum_{1\le n \le x} \chi(n)$ is $o(\sqrt{x})$. Precisely, it is proved that if $1\le x\le r$ with $\min(x,r/x)\to \infty$, then
\begin{equation}
\label{harper}
 \frac{1}{r-1}   \sum_{\chi \bmod r} |\sum_{1\le n \le x} \chi(n)| = o(\sqrt{x}),
\end{equation}
which would imply that for ``typical $\chi$", the character sum is of size $o(\sqrt{x})$.\footnote{In both the random model and the character sum cases, Harper proved a sharp quantitative saving.}
This somewhat surprising phenomenon was first proved in an earlier beautiful work of Harper \cite{HarperLow} in the random model case, namely instead of for a character $\chi$, the ``better than square-root cancellation" phenomenon holds for a random sum $\sum_{1\le n \le x} f(n)$, where $f(n)$ is a random multiplicative function,\footnote{$f(p)$ is defined as an independent identically distributed random variable taking values on complex unit circle for all primes $p$, and $f(n)$ is defined as a completely multiplicative function.}
which can be viewed as a random model for Dirichlet characters. In order to connect the deterministic case to the random multiplicative function model, Harper used a derandomization method.  

The underlying reason for the ``better than square-root cancellation" happening is subtle, which connects to the study of critical multiplicative chaos in probability theory. It is also believed that such a structure is delicate and may easily be destroyed by some perturbation. For example, one can consider general character sums like $\sum_{1\le n \le x} a(n) \chi(n)$ (see \cite{harpertypical, WXratio}) or random sums $\sum_{1\le n \le x} a(n)f(n)$ (see \cite{SoundXu, GWCLT, KSX}) with a twist $a(n)$. It is believed that for some generic coefficients $a(n)$, such a ``better than square-root cancellation" would die out (see \cite[\S 1]{Xu} for more discussions). 

In this paper, we are interested in \emph{mixed} character sums of the form $\sum_{1\le n \le x} \chi(n) e(n\theta)$, featuring both an additive character $e(n\theta)$ and a multiplicative character $\chi(n)$. 

For technical reasons it is simpler for us to work with a smooth weight $w\maps\R\to \R$.
Throughout this paper,
fix a smooth function $w\maps \R\to \R$,
supported on $[0,1]$,
with $\int_0^1 w(t)^2\, dt > 0$.
Note that by partial summation, \eqref{harper} immediately implies
\begin{equation}
 \frac{1}{r-1}   \sum_{\chi \bmod r} |\sum_{n\ge 1}  \chi(n) w(n/x)| = o(\sqrt{x}).
\end{equation}
Similarly,
it is also known that for any fixed \emph{rational} number $\theta$ we have
\begin{equation}
 \frac{1}{r-1}   \sum_{\chi \bmod r} |\sum_{n\ge 1} \chi(n) e(n\theta) w(n/x) | = o(\sqrt{x}).
\end{equation}
Our main theorem shows that the typical size will be on the square-root size when $\theta$ is not ``too close to the rationals".
(The precise condition we impose on $\theta$ is \eqref{1.4},
which is satisfied for most irrational numbers, including $\pi$, $e$, and any algebraic irrational $\theta$.)
\begin{theorem}\label{THM: main}
Let $\theta$ denote an irrational real number such that for some positive constant $C=C(\theta)$ and all $q \in {\mathbb N}$, the following holds:
\begin{equation} \label{1.4}
\Vert q \theta \Vert  := \min_{n\in {\mathbb Z}} |q\theta -n | \ge C \exp( - q^{\frac{1}{4}}).
\end{equation}
If $x$ is sufficiently large (in terms of $w$), then we have for all $ x\le r$, 
\begin{equation}\label{main-thm-goal}
\sqrt{x}
\ll \frac{1}{r-1}\sum_{\chi \bmod r} | \sum_{1\le n \le x}  \chi(n) e(n\theta) w(n/x)|
\ll \sqrt{x},
\end{equation}
where the implied constants are allowed to depend on $\theta$ (and $w$).
\end{theorem}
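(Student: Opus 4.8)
The plan is to establish the two inequalities in \eqref{main-thm-goal} separately: the upper bound is soft, while the lower bound will rest on a sharp fourth-moment estimate. Write $S_\chi=\sum_{1\le n\le x}\chi(n)e(n\theta)w(n/x)$. For the upper bound, Cauchy--Schwarz gives $\tfrac{1}{r-1}\sum_{\chi}|S_\chi|\le\bigl(\tfrac{1}{r-1}\sum_{\chi}|S_\chi|^2\bigr)^{1/2}$; expanding the square and using orthogonality of the characters modulo $r$ together with the facts that $1\le m,n\le x\le r$ and $r$ is prime (so $m\equiv n\Mod r$ forces $m=n$), the off-diagonal terms vanish and
\[
\tfrac{1}{r-1}\sum_{\chi\bmod r}|S_\chi|^2=\sum_{1\le n\le x}w(n/x)^2+O(1)=x\int_0^1 w(t)^2\,dt+o(x),
\]
which is $\asymp x$ for $x$ large since $\int_0^1 w^2>0$. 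This yields the upper bound with no hypothesis on $\theta$, and it identifies what must be proved: on average over $\chi$, the sum $S_\chi$ enjoys no ``better than square-root'' cancellation.

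For the lower bound I would use that, for any complex random variable $Z$, H\"older's inequality gives $\E|Z|^2\le(\E|Z|)^{2/3}(\E|Z|^4)^{1/3}$, hence $\E|Z|\ge(\E|Z|^2)^{3/2}(\E|Z|^4)^{-1/2}$. Applying this with $Z=S_\chi$ for $\chi$ uniform modulo $r$, and using $\E|S_\chi|^2\asymp x$ from the previous step, it suffices to prove the \emph{sharp} fourth-moment bound
\[
\tfrac{1}{r-1}\sum_{\chi\bmod r}|S_\chi|^4\ll_\theta x^2 ,
\]
with \emph{no} logarithmic loss: an extra $(\log x)^{c}$ here would only deliver $\E|S_\chi|\gg\sqrt{x}/(\log x)^{c/2}$.

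Expanding and using orthogonality, the fourth moment equals $\sum e\bigl((m_1+m_2-n_1-n_2)\theta\bigr)\prod_{i}w(\cdot/x)$, summed over $m_1,m_2,n_1,n_2\in[1,x]$ with $m_1m_2\equiv n_1n_2\Mod r$. The dominant contribution is the case of equality $m_1m_2=n_1n_2$, which I would treat via the classical bijective parametrization $(m_1,m_2,n_1,n_2)=(ab,cd,ac,bd)$ with $a=\gcd(m_1,n_1)$ and $\gcd(b,c)=1$: then $m_1+m_2-n_1-n_2=(a-d)(b-c)$ and the support conditions become $a,d\le x/\max(b,c)$ \emph{independently}. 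Hence for each $(b,c)$ the $(a,d)$-sum factors into a product of a sum over $a$ and a sum over $d$, and these two sums being complex conjugates of one another the product equals $\bigl|\sum_{a}e\bigl(a(b-c)\theta\bigr)w(ab/x)w(ac/x)\bigr|^2\ge0$. Estimating this smooth exponential sum by $\ll_A(x/\max(b,c))\bigl(1+(x/\max(b,c))\|(b-c)\theta\|\bigr)^{-A}$, summing over $t=\max(b,c)$ first and then over $\delta=|b-c|$ (there being $O(1)$ pairs $(b,c)$ for each $(\delta,t)$), the $t$-sum is $\ll\min\bigl(x^2/\delta,\ x/\|\delta\theta\|\bigr)$; the pair $b=c=1$ produces the main term $(x\int_0^1 w^2)^2$, the generic $\delta$ with $\|\delta\theta\|\asymp1$ contribute $\ll x$ each and $\ll x^2$ in total, and for the dangerous $\delta$ --- those near a denominator of a convergent of $\theta$, where $\|\delta\theta\|$ is small --- the Diophantine condition \eqref{1.4}, used in the form ``$\|\delta\theta\|<\eta\Rightarrow\delta>(\log(C/\eta))^4$'', keeps the remainder $O_\theta(x^2)$.

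When $x$ is close to $r$ the congruence $m_1m_2\equiv n_1n_2$ is strictly weaker than the equation, and one must also bound the contribution of $m_1m_2-n_1n_2=kr$ over $1\le|k|\le x^2/r$. For each fixed such $k$ this is a quadratic Diophantine equation in four variables; writing $g=\gcd(m_1,n_1)$, $m_1=g\mu$, $n_1=g\nu$ with $\gcd(\mu,\nu)=1$ (so that $g\mid kr$) reduces it to the linear equation $\mu m_2-\nu n_2=kr/g$, whose integer solutions form a single arithmetic progression parametrized by $j\in\Z$. Along this progression $m_1+m_2-n_1-n_2$ is affine in $j$ with leading coefficient $\mp(\mu-\nu)$, so summing the smooth weight over $j$ again yields cancellation $\ll_A(x/\max(\mu,\nu))\bigl(1+(x/\max(\mu,\nu))\|(\mu-\nu)\theta\|\bigr)^{-A}$ governed by $\|(\mu-\nu)\theta\|$, to which \eqref{1.4} applies; it then remains to sum this over the divisors $g\mid kr$ and over $k$ and show the total is $O_\theta(x^2)$. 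I expect this to be the principal obstacle --- these are the ``certain quadratic Diophantine equations'' advertised in the abstract --- since the $k\neq0$ terms must be controlled uniformly for all $x\le r$ and, once more, without any logarithmic slack. Combining this with the $k=0$ estimate gives the sharp fourth-moment bound, and together with the reduction and the upper bound this establishes \eqref{main-thm-goal}.
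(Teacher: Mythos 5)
Your reduction to a sharp fourth-moment bound, your second-moment upper bound, and your H\"older step are exactly the paper's. For $x\le\sqrt r$ (equivalently, the terms with $m_1m_2=n_1n_2$) your parametrization $(m_1,m_2,n_1,n_2)=(ab,cd,ac,bd)$ with $\gcd(b,c)=1$, the factorization into $\big|\sum_a e(a(b-c)\theta)w(ab/x)w(ac/x)\big|^2\ge 0$, and the subsequent use of \eqref{eqn: geo} together with the Diophantine spacing on $\|\delta\theta\|$ is, up to relabeling, the Soundararajan--Xu argument that the paper carries out in Section~\ref{Sec: short} (Proposition~\ref{prop: short sum} and Lemma~\ref{lem8.2}). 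That part of your proposal is sound.

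The genuine divergence, and the genuine gap, is in the range $\sqrt r<x\le r$. There the paper does \emph{not} attack the constraint $m_1m_2-n_1n_2=kr$ directly. It first applies Poisson summation in $(\Z/r\Z)\times\R$ to the smoothed sum (see \eqref{poisson}), which transfers the problem from length $x$ to length $\asymp r/x\le\sqrt r$, and turns the fourth moment into a count of $(m_1,m_2,n_1,n_2)\in I^4$, $|I|\asymp r/x$, with $(k+m_1)(k+m_2)\equiv(k+n_1)(k+n_2)\bmod r$ where $k=\lfloor r\theta\rfloor$. Via the injection of Lemma~\ref{lem:injection} this is dominated by a lattice-point count on $ab+2cS=S^2-4P$ subject to the congruence $kS\equiv P\bmod r$, and the Diophantine hypothesis enters through the pigeonhole bound of Lemma~\ref{pigeonhole} on the number of small solutions $(S,P)$ to $kS\equiv P\bmod r$. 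This is a cleaner problem: one set of short variables, one congruence, one quadric.

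Your alternative for $k\ne 0$ --- write $g=\gcd(m_1,n_1)$, $m_1=g\mu$, $n_1=g\nu$, reduce to the linear equation $\mu m_2-\nu n_2=kr/g$, and exploit the oscillation $e(-(\mu-\nu)j\theta)$ along the resulting arithmetic progression --- is a legitimate start, and the phase and weight you write down are correct. But you stop exactly where the work begins. One must then sum the smooth-sum bound $\ll_A \frac{x}{\max(\mu,\nu)}\big(1+\frac{x\|(\mu-\nu)\theta\|}{\max(\mu,\nu)}\big)^{-A}$ over $g\mid k$ (using that $r$ is prime so $g\mid kr\Leftrightarrow g\mid k$ for $g<r$), over $k=g\ell$ with $|\ell|\le x^2/(gr)$, and over $\mu,\nu\le x/g$, and verify that the total is $\ll x^2$ with \emph{no} logarithmic loss and uniformly up to $x=r$. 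The $\ell$-sum has no oscillation in your scheme, so it contributes a flat factor $\asymp x^2/(gr)$, and the $\mu,\nu$-sum must then be controlled with enough decay that $\sum_g\frac{x^2}{gr}\cdot(\cdots)\ll x^2$; the ``dangerous'' $\delta=\mu-\nu$ with $\|\delta\theta\|$ small must be handled by a spacing lemma analogous to Lemma~\ref{lem8.2}, now in the window $[1,x/g]$, and one must also account for $\mu=\nu=1$ and for $g=r$ when $x=r$. You label this ``the principal obstacle'' and, in effect, conjecture it works; you do not prove it. Whether or not the direct route can be pushed through, the paper's Poisson/conductor-lowering step is precisely the device that makes the $x>\sqrt r$ range tractable, and it is the ingredient missing from your proposal.
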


Henceforth, to simplify the notation
we let $\E_{\chi} S(\chi) \defeq \frac{1}{r-1}\sum_{\chi \bmod r} S(\chi)$.

We give an overview of the proof. The upper bound on the first absolute moment follows easily from the second moment computation together with the Cauchy--Schwarz inequality. Indeed, 
for $1\le x\le r$, the orthogonality relation implies that
\begin{equation}\label{eqn: 2nd}
 \E_\chi [|\sum_{1\le n \le x } \chi(n) e(n\theta) w(n/x) |^2]
= \sum_{1\le n\le \min(x,r-1)} w(n/x)^2
\sim x \int_0^1 w(t)^2\, dt \asymp x,   
\end{equation}
provided that $x$ is sufficiently large (in terms of $w$).

A key feature of our result is that the average size is around $\sqrt{x}$ uniformly \textit{for all} $x\le r$.
However, our methods diverge around $x\asymp \sqrt{r}$. 
In the case $x\le \sqrt{r}$ we show the bound without the weight, that is, for $x\le \sqrt{r}$, 
 \begin{equation}\label{eqn: 4 upper}
    \E_\chi[|\sum_{1\le n \le x } \chi(n) e(n\theta)|^{4}] \ll x^{2}.
\end{equation}
By using partial summation, this would imply 
\begin{equation}\label{eqn: 4 upper weight}
    \E_\chi[|\sum_{1\le n \le x } \chi(n) e(n\theta) w(n/x)|^{4}] \ll x^{2}.
\end{equation}
An application of  H\"{o}lder's inequality together with \eqref{eqn: 2nd} and \eqref{eqn: 4 upper weight} establishes the lower bound on the first absolute moment.
By using the orthogonality, the fourth moment of the mixed character sum in \eqref{eqn: 4 upper} is the same as 
\begin{equation}\label{eqn: cong}
  \sum_{\substack{m_1m_2 \equiv n_1n_2 \bmod r \\ m_i, n_i \le x}} e((m_1+m_2- n_1-n_2)\theta).   
\end{equation}
The congruence condition is irrelevant if $x$ is small enough compared with $r$. In the above equation~\eqref{eqn: cong} when $x\le \sqrt{r}$, the ``$\equiv$" is the same as the ``$=$". One may view the $\chi(\cdot)$ as a random variable distributed on the unit circle. Indeed, the fourth moment above is the same as $ \E_{f} [ | \sum_{1\le n \le x} f(n) e(n\theta)|^{4}] $ where $f$ is a Steinhaus random multiplicative function. The parallel result for the random multiplicative function is known. It is proved that \cite{SoundXu} for the family of $\theta$ satisfying a certain weak Diophantine condition (but stronger than our requirement in Theorem~\ref{THM: main}),
the quantity $\sum_{1\le n \le x} f(n) e(n\theta)$ behaves Gaussian with variance $x$,
which as a consequence, shows that its typical size is $\sqrt{x}$. An earlier result \cite{BNR} proved this for almost all $\theta$ by using the method of moments. We will employ the methods in \cite{SoundXu} to deduce our result in the $x\le \sqrt{r}$ range in Section~\ref{Sec: short}.

The bulk of the paper focuses on the case $x\ge \sqrt{r}$. In this case, we do not directly compute the fourth moment. Instead, we do a Poisson summation first to study a dual problem, with $r/x$ replacing $x$.
This completes the sum over $n$
modulo $r$, leading to a new problem involving Diophantine approximation and congruences.
Recently, Heap and Sahay \cite{heap2025fourth}
also analyzed Diophantine approximations, in a different way,
when bounding the fourth moment of the Hurwitz zeta function.
Their work is closely tied to partial sums of the form $\sum_{1\le n\le x} n^{it} e(n\theta)$, featuring $n^{it}$ rather than $\chi(n)$.

\begin{remark}
A restriction like $x\le r$ is necessary in Theorem~\ref{THM: main}.
For instance, if $x\equiv 0\bmod{r}$, then
\begin{equation*}
\E_{\chi} \abs{\sum_{1\le n \le x} \chi(n) e(n\theta)}
= \frac{\abs{1-e((x-r)\theta)}}{\abs{1-e(r\theta)}}
\E_{\chi} \abs{\sum_{1\le n \le r} \chi(n) e(n\theta)}
\ll \frac{1}{\abs{1-e(r\theta)}} \sqrt{r},
\end{equation*}
by the easy ($\ll$) direction of \eqref{main-thm-goal}.
But if $x/r\to \infty$, then for infinitely many primes $r$ the right-hand side is $\ll \sqrt{r}$, not $\asymp \sqrt{x}$.

For instance, if $c\ge 2$ such that $p_n+c$ is prime for infinitely many primes $p_n$,
then for some $r_n \in \{p_n,p_n+c\}$, we have, by the triangle inequality
\begin{equation*}
\begin{split}
\abs{1-e(r_n\theta)}
&= \max(\abs{1-e(p_n\theta)}, \abs{1-e((p_n+c)\theta)}) \\
&\gg \abs{e(p_n\theta)-e((p_n+c)\theta)}
= \abs{1-e(c\theta)}
\gg_\theta 1.
\end{split}
\end{equation*}
\end{remark}

Our main result concerns the size of the mixed character sums for the large range where the two key parameters satisfy $x\le r$. It would be nice if one could determine the distribution. In particular, we raise the following question.
\begin{question}\label{ques}
For which ranges of parameters $x,r\to +\infty$
is it true that
as $\chi$ varies uniformly over the family of Dirichlet characters mod $r$, we have
  \[   \frac{1}{\sqrt{x}}\sum_{1\le n \le x} \chi(n) e(\sqrt{2}n) \xrightarrow{d} \mathcal{CN}(0,1)?\]
 \end{question} 
In the above notation, $\mathcal{CN}(0,1)$ denotes standard complex Gaussian distribution.
One may also ask about any $\theta$ that is not ``close to" rationals;
we put $\theta=\sqrt{2}$ for concreteness.
Originally, we conjectured that Question~\ref{ques} should have an affirmative answer for all $1\le x\le r$.
However, very recent work of Bober, Klurman, and Shala
\cite{bober2025distribution}
shows, in particular, that Question~\ref{ques} in fact has a negative answer for $x=r$.

To attack Question~\ref{ques}, starting with the method of moments is natural. For $x\le r^{\epsilon}$, we have perfect orthogonality even for high moments computation and the problem is essentially the same as replacing $\chi$ with a random multiplicative function. This might be doable; for example, we refer the readers to \cite{PWX, WXPaucity} for high moments computation on related problems. It would be even more interesting to determine the limiting distribution for the full range $x\le r$, for which the periodicity modulo $r$ can no longer be ignored.  \\

There has been a line of nice work on studying the character sums with additive twists without averaging over characters, and we refer readers to \cite{MV77, BG22} and references therein. We also mention that there is the dual direction of the problem, where one can average over $\theta$ but fix a multiplicative function, e.g.~the Liouville function $\lambda(n)$ (see \cite{PR1} for the most recent development and reference therein).

\subsection*{Acknowledgements}
We thank
Ofir Gorodetsky,
Andrew Granville,
Adam Harper,
Youness Lamzouri,
Kannan Soundararajan,
Ping Xi,
and Matt Young,
for their interest, helpful discussions and comments.
Special thanks are due to Jonathan Bober, Oleksiy Klurman, and Besfort Shala
for sending us a letter about Question~\ref{ques},
and to Hung Bui for informing us of \cite{heap2025fourth}.
V.Y.W. thanks Stanford University for its hospitality,
and is supported by the European Union's Horizon~2020 research and innovation program under the Marie Sk\l{}odowska-Curie Grant Agreement No.~101034413.
M.W.X. is supported by a Simons Junior Fellowship from the Simons Society of Fellows at Simons Foundation.
Finally, we thank the anonymous referee for their comments.

\section{Short sum case}\label{Sec: short}
In this section, we prove the lower bound in Theorem~\ref{THM: main} in the case $x\le \sqrt{r}$. As discussed in the introduction, it suffices to prove an upper bound as in \eqref{eqn: 4 upper}. 
Expanding the fourth moment and noticing the perfect orthogonality (thanks to $x\le \sqrt{r}$), it is sufficient to show the following proposition. 

\begin{proposition}~\label{prop: short sum}
Let $\theta$ satisfy the Diophantine condition \eqref{1.4}. Let $x$ be large. Then 
\begin{equation}\label{eqn: really}
\Big| \sum_{\substack{1\le  m_1, m_2, n_1, n_2 \le x \\ m_1 m_2= n_1 n_2 \\ \{m_1, n_1\} \neq \{m_2, n_2\} }} e((m_1+m_2- n_1-n_2)\theta) \Big| =o (x^{2}).
\end{equation}
\end{proposition}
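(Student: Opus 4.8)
The plan is to parametrise the solutions of $m_1m_2=n_1n_2$, reduce \eqref{eqn: really} to an exponential sum in the additive character, and then show that this sum is $o(x^2)$ by extracting cancellation away from a sparse set of ``resonant'' frequencies controlled by \eqref{1.4}; this follows the strategy of Soundararajan--Xu \cite{SoundXu} for the random multiplicative model, adapted to the weaker hypothesis. First, every quadruple with $m_1m_2=n_1n_2$ is uniquely of the form $(m_1,m_2,n_1,n_2)=(sa,tb,ta,sb)$ with $\gcd(s,t)=1$ and $a,b\in{\mathbb N}$ (take $s/t=m_1/n_1$ in lowest terms, $a=m_1/s$, $b=n_2/s$). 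Then $m_1+m_2-n_1-n_2=(s-t)(a-b)$, the condition $m_i,n_i\le x$ becomes $\max(s,t)\max(a,b)\le x$, and the diagonal $\{m_1,m_2\}=\{n_1,n_2\}$ is precisely $\{s=t\}\cup\{a=b\}$, i.e.\ $(s,t)=(1,1)$ or $a=b$. So \eqref{eqn: really} amounts to
\[
\Sigma\defeq\sum_{\substack{\gcd(s,t)=1\\ (s,t)\neq(1,1)}}\ \sum_{\substack{a\neq b\\ \max(a,b)\le x/\max(s,t)}}e\bigl((s-t)(a-b)\theta\bigr)=o(x^2).
\]

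For a fixed admissible pair, write $d=s-t\neq0$ and $Y=x/\max(s,t)$; summing over $a\neq b$ gives $\sum_{a\neq b,\,a,b\le Y}e(d(a-b)\theta)=\bigl|\sum_{a\le Y}e(da\theta)\bigr|^2-\floor{Y}\ll\min(Y,\|d\theta\|^{-1})^2$, using the closed form of the geometric sum and $\floor{Y}\le Y$ on the diagonal term that is subtracted. Since $\max(s,t)>|d|$ forces $Y<x/|d|$, the contribution to $\Sigma$ of all pairs with a given $\pm d$ is $\ll\sum_{M>|d|}\min(x/M,\|d\theta\|^{-1})^2\ll\min(x^2/|d|,\,x/\|d\theta\|)$; the cap $x^2/|d|$ says that one very small value of $\|d\theta\|$ cannot contribute more than $x^2/|d|$ no matter how small it is.

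Now fix a slowly shrinking threshold $\eta=\exp(-(\log x)^{1/3})$ and call $d$ \emph{resonant} if $\|d\theta\|<\eta$. By \eqref{1.4} one has $\|d\theta\|\ge C\exp(-d^{1/4})$, so any resonant $d$ satisfies $\exp(-d^{1/4})<\eta/C$, hence $d\gg(\log(1/\eta))^4\asymp(\log x)^{4/3}$; in particular every resonant $d$ exceeds $(\log x)^{4/3}$. A standard continued-fraction estimate — each resonant $d$ lies in a block $[q_n,q_{n+1})$ of consecutive continued-fraction denominators of $\theta$ with $q_{n+1}\gg1/\eta$, the resonant values inside such a block being essentially the multiples of $q_n$, while the $q_n$ grow at least geometrically — then gives $\sum_{d\le x,\,\|d\theta\|<\eta}1/d\ll\log x/(\log(1/\eta))^4\ll(\log x)^{-1/3}$. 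Combined with the per-$d$ bound $x^2/|d|$ above, the total resonant contribution to $\Sigma$ is $\ll x^2(\log x)^{-1/3}=o(x^2)$. This is exactly where the exponent $\tfrac{1}{4}$ in \eqref{1.4} matters: it is weak enough that $\pi$, $e$, and every algebraic irrational qualify, yet forces the resonant denominators past $(\log x)^{4/3}$, which beats the single logarithm lost to the harmonic sum.

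It remains to bound the non-resonant part, the sum over $(s,t)$ with $\|(s-t)\theta\|\ge\eta$, and here a termwise absolute-value bound is insufficient, since both $\bigl|\sum_{a\le Y}e(da\theta)\bigr|^2$ and the subtracted $\floor{Y}$ can each reach size $x^2$; one must keep the remaining oscillation. I would pass to the dual form of the fourth moment,
\[
\E_f\Bigl|\sum_{n\le x}f(n)e(n\theta)\Bigr|^4=\sum_{N}\ \Bigl|\sum_{\substack{d\mid N\\ N/x\le d\le x}}e\bigl((d+N/d)\theta\bigr)\Bigr|^2,
\]
peel off the diagonal $d=d'$ and the conjugate $d'=N/d$ (which together contribute $2\floor{x}^{2}-\floor{x}$, all with phase $1$), and estimate the remaining terms, where $d+N/d\neq d'+N/d'$, via equidistribution of $(d-d')\theta$; after the resonances have been excised this remainder is $o(x^2)$. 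The hard part, and the reason a hypothesis like \eqref{1.4} is needed rather than mere irrationality, is to make this final cancellation uniform in the very weak Diophantine regime — one must show that a lower bound as small as $\|(s-t)\theta\|\ge\exp(-(\log x)^{1/3})$ already suffices for the exponential-sum estimates feeding the error term. That is the step I expect to demand the most care.
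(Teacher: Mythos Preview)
Your parametrisation and the per-$d$ estimate are correct, and the resonant half is morally fine (though the continued-fraction bookkeeping is sketchy). But the proof has a genuine gap: the non-resonant contribution is never bounded. You flag this yourself --- proposing to ``pass to the dual form'' and conceding that this is ``the step I expect to demand the most care'' --- but no argument is supplied. The obstruction is real: for non-resonant $d$ your bound $\min(x^{2}/|d|,\,x/\Vert d\theta\Vert)$, summed over $|d|\le x$, gives at best $\sum_{|d|\le x}x^{2}/|d|\asymp x^{2}\log x$, one logarithm too large, and the divisor-dual rewriting does not by itself manufacture the missing cancellation.

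The paper closes the gap by a structural split you do not make. Since $\max(s,t)\max(a,b)\le x$, at least one of the two maxima is $\le\sqrt{x}$; treat the cases (and their overlap) separately, so that the relevant difference is always $\le\sqrt{x}$. When the free pair is small, say $\max(a,b)\le\sqrt{x}$, fix $(a,b)$ and remove the coprimality on $(s,t)$ by M\"obius: the $(s,t)$-sum becomes
\[
\sum_{k}\mu(k)\,\Bigl|\sum_{u\le x/(k\max(a,b))}e\bigl(k(a-b)u\theta\bigr)\Bigr|^{2}+O(1).
\]
Now redefine resonance: $\ell=a-b$ is \emph{resonant} if $\Vert k\ell\theta\Vert\le x^{-1/3}$ for some $k\le(\log x)^{1+\epsilon}$. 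For non-resonant $\ell$ each small-$k$ inner sum is $\le x^{1/3}$ by the geometric-series bound, while the tail $k>(\log x)^{1+\epsilon}$ contributes, via the trivial bound $(x/(k\max(a,b)))^{2}$, at most $x^{2}/((\log x)^{1+\epsilon}\max(a,b)^{2})$; summing over $a,b\le\sqrt{x}$ loses only one logarithm and yields $O(x^{2}/(\log x)^{\epsilon})$. For resonant $\ell$, the hypothesis \eqref{1.4} forces any two resonant values to differ by $\gg(\log x)^{3/2}$ (if $\Vert k_{i}\ell_{i}\theta\Vert\le x^{-1/3}$ then $\Vert k_{1}k_{2}(\ell_{1}-\ell_{2})\theta\Vert\ll(\log x)^{1+\epsilon}x^{-1/3}$, whence $k_{1}k_{2}|\ell_{1}-\ell_{2}|\gg(\log x)^{4}$), so their total is $\ll\sum_{a\le\sqrt{x}}(x^{2}/a^{2})\cdot a/(\log x)^{3/2}\ll x^{2}/(\log x)^{1/2}$. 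The case where the coprime pair is small is easier (no M\"obius needed). The key point is that the M\"obius parameter $k$ is what absorbs the stray logarithm --- splitting the $k$-sum at $(\log x)^{1+\epsilon}$ is exactly the device your single-variable-in-$d$ approach lacks.
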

The proof of the Proposition closely follows the proof of \cite[Theorem 1.6, Theorem 3.1]{SoundXu}.
Our situation is simpler and we give a proof here for completeness.
We remark that the main difference is that the exponential sum in \cite{SoundXu} has multiplicative coefficients, which requires a result of Montgomery--Vaughan \cite{MV2007}.
Instead, we use the following simple estimate which leads to the weaker Diophantine condition \eqref{1.4} and thus a better quantitative result.  

A simple starting point is that 
\begin{equation}\label{eqn: geo}
    \sum_{1\le n \le y} e(n\alpha) = \frac{e(\alpha)(1-e(y\alpha))}{1-e(\alpha)} \ll \min \{ y,  \frac{1}{\|\alpha\|}\}. 
\end{equation}
\begin{proof}[Proof of Proposition~\ref{prop: short sum}]
We use the same parametrization as in \cite{SoundXu},
to write $m_1=ga$, $m_2 =hb$, $n_1= gb$ and $n_2=ha$.  The constraints on $m_1$, $m_2$, $n_1$, $n_2$ then become $(a,b)=1$ with $a\neq b$,  $g\neq h$, and $\max(a,b) \times \max(g,h)\le x$.  
Thus the sum we wish to bound becomes 
\begin{equation} 
\label{8.2} 
\sum_{\substack{ \max (a,b ) \times \max(g, h) \le x \\ a\neq b, \ (a,b)=1 \\g\neq h}} e((g-h)(a-b) \theta). 
\end{equation} 
Since $\max(a,b) \times \max(g,h) \le x$, we may break the sum above into the cases (1) when $\max(g,h) \le \sqrt{x}$, (2) when $\max(a,b )\le \sqrt{x}$, taking 
care to subtract the terms satisfying (3) $\max(a,b)$ and $\max(g,h)$ both below $\sqrt{x}$.  

Before turning to these cases, we record a preliminary lemma which will be useful in our analysis. 

\begin{lemma} \label{lem8.2} Let $\theta$ be an irrational number satisfying the Diophantine condition \eqref{1.4}. 
Let ${\mathcal L} = {\mathcal L}(x)$ denote the set of all integers $\ell$ with $|\ell| \le \sqrt{x}$ such that for some 
$k \le (\log x)^{1+\epsilon}$ one has $\Vert k\ell \theta \Vert \le x^{-\frac 13}$, where $\epsilon>0$ is small.  Then $0$ is in ${\mathcal L}$, and for any two distinct elements $\ell_1$, $\ell_2 \in {\mathcal L}$ 
we have $|\ell_1 - \ell_2| \gg (\log x)^{3/2}$.
\end{lemma}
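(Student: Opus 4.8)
The plan is to work directly from the Diophantine hypothesis \eqref{1.4}. The membership $0 \in \mathcal{L}$ is immediate: for $x$ large we may take $k = 1 \le (\log x)^{1+\epsilon}$, and then $\|k\cdot 0\cdot\theta\| = 0 \le x^{-1/3}$. For the spacing statement I would argue by contradiction: suppose $\ell_1 \neq \ell_2$ both lie in $\mathcal{L}$ with $|\ell_1 - \ell_2| \le (\log x)^{3/2}$, and pick integers $1 \le k_1, k_2 \le (\log x)^{1+\epsilon}$ witnessing membership, i.e. $\|k_i\ell_i\theta\| \le x^{-1/3}$ for $i = 1,2$. The whole proof then revolves around the single positive integer $q := k_1 k_2 |\ell_1 - \ell_2|$, which is at most $(\log x)^{2(1+\epsilon)}\cdot(\log x)^{3/2} = (\log x)^{7/2 + 2\epsilon}$.

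The two halves of the contradiction are an upper and a lower bound for $\|q\theta\|$. For the upper bound, write $q\theta = \pm\bigl(k_2(k_1\ell_1\theta) - k_1(k_2\ell_2\theta)\bigr)$ and use $\|m\alpha\|\le m\|\alpha\|$ for $m\in\N$ together with the triangle inequality $\|\alpha-\beta\|\le\|\alpha\|+\|\beta\|$ on $\R/\Z$, to get
\[
\|q\theta\| \le k_2\|k_1\ell_1\theta\| + k_1\|k_2\ell_2\theta\| \le (k_1+k_2)x^{-1/3} \le 2(\log x)^{1+\epsilon} x^{-1/3}.
\]
For the lower bound, since $q \in \N$ the hypothesis \eqref{1.4} applies and, using $q \le (\log x)^{7/2+2\epsilon}$, gives
\[
\|q\theta\| \ge C\exp(-q^{1/4}) \ge C\exp\!\bigl(-(\log x)^{7/8+\epsilon/2}\bigr).
\]
Provided $\epsilon < 1/4$ the exponent $7/8+\epsilon/2$ is less than $1$, so $(\log x)^{7/8+\epsilon/2} = o(\log x)$ and the right-hand side is $x^{-o(1)}$; in particular it exceeds $2(\log x)^{1+\epsilon}x^{-1/3} = x^{-1/3+o(1)}$ once $x$ is large. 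This contradiction forces $|\ell_1 - \ell_2| > (\log x)^{3/2}$ for $x$ large, which is the asserted bound. (In fact the same argument yields $|\ell_1-\ell_2| \gg (\log x)^A$ for any fixed $A < 2 - 2\epsilon$; the value $3/2$ is chosen for convenience downstream.)

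There is no serious obstacle here: the only points requiring care are that $q$ is a genuine positive integer — which holds because $\ell_1 \neq \ell_2$ and $k_1,k_2\ge 1$ — so that \eqref{1.4} is applicable, and that $\epsilon$ be fixed small enough before comparing the competing bounds $x^{-o(1)}$ and $x^{-1/3+o(1)}$. The role of the weak condition \eqref{1.4}, as opposed to a polynomial irrationality measure, is precisely that in this range $q^{1/4}$ is only a sub-logarithmic power of $\log x$, so $\exp(-q^{1/4})$ is negligible next to the power saving $x^{1/3}$ we can afford to lose; a polynomial lower bound on $\|q\theta\|$ would of course also work and give an even cleaner estimate.
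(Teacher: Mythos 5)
Your proof is correct and takes essentially the same approach as the paper's: both consider $q = k_1k_2|\ell_1-\ell_2|$, bound $\|q\theta\|$ above by $2(\log x)^{1+\epsilon}x^{-1/3}$ via the triangle inequality, and then invoke the Diophantine condition \eqref{1.4} to conclude. The only cosmetic difference is that the paper derives $k_1k_2|\ell_1-\ell_2| \gg (\log x)^4$ directly and then divides by $k_1k_2 \le (\log x)^{2+2\epsilon}$, whereas you argue by contradiction from an assumed upper bound on $|\ell_1-\ell_2|$; the content is identical.
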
 
\begin{proof} Evidently $0$ is in ${\mathcal L}$, and the main point is the spacing condition satisfied by elements of ${\mathcal L}$.  If $\ell_1$ and $\ell_2$ are distinct elements of ${\mathcal L}$ then there exist $k_1$, $k_2 \le (\log x)^{1+\epsilon}$ with $\Vert k_1 \ell_1 \theta\Vert \le x^{-\frac 13}$ and $\Vert k_2 \ell_2 \theta \Vert \le x^{-\frac 13}$.  It follows that 
$\Vert k_1 k_2 (\ell_1 -\ell_2) \theta\Vert \le 2(\log x)^{1+\epsilon} x^{-\frac 13}$.  The desired bound on $|\ell_1 -\ell_2|$ now follows from the Diophantine property that we required of $\theta$, namely that $\Vert q\theta \Vert \gg \exp(-q^{\frac 1{4}})$  which implies that $(\log x  )^{4} \ll k_1 k_2 |\ell_1-\ell_2|$ and the desired estimate follows.
\end{proof} 
\subsection*{Case 1: \texorpdfstring{$\max (g, h )\le \sqrt{x}$}{max(g,h) <= sqrt(x)}} Suppose that $g$ and $h$ are given with $g$ and $h$ below $\sqrt{x}$ and $g\neq h$, and consider the sum over $a$ and $b$ in \eqref{8.2}.  
We distinguish two sub-cases, depending on whether $g-h$ lies in ${\mathcal L}$ or not.  Consider first the situation when $g-h \not\in {\mathcal L}$.  Using M{\" o}bius inversion to detect the condition that $(a,b)=1$, the sums over $a$ and $b$ may be expressed as (the $O(1)$ error term accounts for the term $a=b=1$ which must be omitted)
\begin{align}\label{8.3}  
&\sum_{\substack{ k\le x/\max(g, h)\\ }} \mu(k) \sum_{\substack{ t, s \le x/(k\max (g,h)) \\  }} e(k(g-h)(t-s)\theta) +O(1) 
\nonumber \\
&= \sum_{\substack{ k\le x/\max(g, h)\\ }} \mu(k) \Big|\sum_{\substack{ t \le x/(k\max (g,h)) \\  }} e(k(g-h)t\theta)\Big|^2 +O(1). 
\end{align}
If $k> (\log x)^{1+\epsilon}$ then we bound the sum over $t$ above by $x/(k\max(g,h))$, and so these terms contribute to \eqref{8.3} an amount 
$$
\ll \sum_{k>(\log x)^{1+\epsilon/2}} \frac{x^2}{k^2 \max(g,h)^2} \ll \frac{x^2}{(\log x)^{1+\epsilon/2} \max (g, h)^2}.
$$
Now consider $k\le (\log x)^{1+\epsilon/2}$. Since $g-h \not \in {\mathcal L}$ by assumption, it follows that $\Vert k(g-h) \theta \Vert >x^{-1/3} $.
An application of \eqref{eqn: geo} shows that the sum over $t$ in \eqref{8.3} 
is $\le x^{1/3}$.  Thus the terms $k \le(\log x)^{1+\epsilon/2}$ contribute to \eqref{8.3} an amount bounded by 
$$ 
\sum_{k\le (\log x)^{1+\epsilon/2}}x^{2/3} = O(x^{2/3 + \epsilon}).
$$ 
Summing this over all $g$, $h \le \sqrt{x}$, we conclude that the contribution of terms with $\max(g,h)\le \sqrt{x}$ and  $g-h \not\in {\mathcal L}$ to \eqref{8.2} is 
$$
\ll \sum_{g, h \le \sqrt{x}} \frac{x^2}{(\log x)^{1+\epsilon/2} \max(g,h)^2} \ll \frac{x^2}{(\log x)^{\epsilon}}. 
$$ 

Now consider the contribution of the terms $\max(g,h)\le \sqrt{x}$ where $g-h$ lies in ${\mathcal L}$ with $g-h \neq 0$.  Bounding the sum over $a$ and $b$ trivially by $\le (x/\max(g,h))^2$, we see that the 
contribution of these terms is 
$$ 
\ll \sum_{\substack{g \neq h \le \sqrt{x} \\ g- h \in {\mathcal L}}} \frac{x^2}{\max(g,h)^2} \ll \sum_{g\le \sqrt{x}} \frac{x^2}{g^2} \sum_{\substack{ h< g\\ g-h \in {\mathcal L}}} 1 
\ll  \sum_{g\le \sqrt{x}} \frac{x^2}{g^2} \frac{g}{(\log x)^{3/2}} \ll \frac{x^2}{(\log x)^{1/2}}, 
$$ 
where we used Lemma \ref{lem8.2} to bound the sum over $h$.

We conclude that the contribution of terms with $\max(g,h)\le \sqrt{x}$ to \eqref{8.2} is $\ll x^2/(\log x)^{1/2}$, completing our discussion of this case.  
 
 \subsection*{Case 2: \texorpdfstring{$\max\{a, b\}\le \sqrt{x}$}{max(a,b) <= sqrt(x)}}  We first consider the contribution in the case that $g=h$. Then the contribution in this case is 
 \begin{equation}\label{eqn: g=h}
     \ll \sum_{a, b\le \sqrt{x}} \sum_{g \le x/\max\{a, b\}}1 \ll x^{3/2}. 
 \end{equation}
From now on, we allow $g=h$, and thus 
 we must bound 
 $$ 
 \sum_{\substack{ a\neq b \le \sqrt{x} \\ (a, b)=1}} \Big| \sum_{\substack{g\le x/\max(a,b) \\ } } e(g(a-b)\theta)\Big|^2.
 $$
 Again we distinguish the cases when $a-b\in{\mathcal L}$, and when $a-b \not \in {\mathcal L}$.  In the first case, we bound the sum over $g$ above trivially by $\le x/\max(a,b)$, 
 and thus these terms contribute (using Lemma \ref{lem8.2})
 $$ 
 \ll \sum_{a\le \sqrt{x}} \frac{x^2}{a^2} \sum_{\substack{ b <a \\ a-b\in {\mathcal L}}} 1 \ll \sum_{a\le \sqrt{x}} \frac{x^2}{a^2} \frac{a}{(\log x)^{3/2}} \ll \frac{x^2}{(\log x)^{1/2}}. 
 $$ 
 
 Now consider the case when $a-b\not\in {\mathcal L}$. Again, since $a-b\not \in {\mathcal L}$, it follows that $\Vert (a-b)\theta \Vert >x^{-1/3}$ and an application of \eqref{eqn: geo} gives
 $$ 
 \sum_{\substack{g\le x/\max(a,b) \\ }} e(g(a-b)\theta) \ll x^{1/3}.
 $$ 
 Thus the contribution of the terms with $a-b\not \in {\mathcal L}$ is 
 $$ 
 \ll \sum_{a, b\le \sqrt{x}} x^{2/3} \ll x^{5/3}. 
 $$ 
 
 Thus the contribution to \eqref{8.2} from the Case 2 terms is $\ll x^2/(\log x)^{1/2}$.  
 
 \subsection*{Case 3: \texorpdfstring{$\max(a,b)$ and $\max(g,h) \le \sqrt{x}$}{max(a,b) and max(g,h) <= sqrt(x)}}  Similarly, we first bound the contribution from the case $g=h$ as in \eqref{eqn: g=h}. From now on, we allow $g=h$, and thus 
 we must bound 
 $$ 
 \sum_{\substack{ a\neq b \le \sqrt{x} \\ (a, b)=1}} \Big| \sum_{\substack{g\le \sqrt{x} \\ } } e(g(a-b)\theta)\Big|^2,
 $$ 
 and our argument in Case 2 above furnishes the bound $\ll x^2/(\log x)^{1/2}$.  
\end{proof}
\section{Reduction to a counting problem}

 In this section,   we consider the case $\sqrt{r}\le x \le r$
 and reduce \eqref{eqn: 4 upper weight} to
 a counting problem for a quadratic Diophantine equation
 involving a pair of integers $(k, r)$ with $\theta \approx \frac{k}{r}$.
 In the next section, we will use the pigeonhole principle to show that this equation does not have too many solutions, provided the Diophantine condition \eqref{1.4} holds.

The starting point is to apply the Poisson summation formula to flip the character summation in \eqref{eqn: 4 upper weight} from ``up to $x$" to ``up to $r/x$".
Let $k=\floor{r\theta}$.
Write $\theta = k/r + \theta'$ where $0\le \theta' < 1/r$.
This is a pragmatic rational approximation of $\theta$
that will prevent an increase in the ``conductor'' $r$ for the problem at hand.\footnote{This can be thought of as a special case of the ``conductor lowering'' trick in the delta method.
However, we believe it could be worthwhile to explore more complicated rational approximations of $\theta$, in order to sharpen our analysis.}
We define
\[f_{r,\chi}(n):=  \chi(n)e(\frac{kn}{r}),
\quad f_{\infty}(n) := w(\frac{n}{x}) e(n\theta'). \]
Then since $w(n/x) = 0$ for all integers $n\le 0$ and $n\ge x$,
we have
\begin{equation}
\label{poisson}
\sum_{1\le n\le x} \chi(n) e(n\theta) w(\frac{n}{x})
= \sum_{n\in \Z} f_{r,\chi}(n)f_{\infty}(n)
=  \sum_{m\in \Z} \hat{f}_{r,\chi}(\frac{m}{r}) \hat{f}_{\infty}(\frac{m}{r})
\end{equation}
by Poisson summation in $(\Z/r\Z) \times \R$,
where 
\[\hat{f}_{r,\chi}(\frac{m}{r}) =  \frac{1}{r}\sum_{t\in \Z/r\Z}\chi(t) e\left(\frac{(k+m)t}{r}\right) \]
and 
\[\hat{f}_{\infty}(\frac{m}{r}) = \int_{\R} w(\frac{t}{x}) e( (\theta'-\frac{m}{r})t  ) dt.   \]

We now estimate the Fourier coefficients $\hat{f}_{r,\chi}(\frac{m}{r})$
and $\hat{f}_{\infty}(\frac{m}{r})$.
For a fixed $m$, if $k+m \not\equiv 0 \bmod{r}$
then by applying standard properties of Gauss sums,
\begin{equation}
\label{gauss}
\hat{f}_{r,\chi}(\frac{m}{r}) = \chi(k+m)^{-1} \frac{C(\chi)}{r^{1/2}},
\end{equation}
where $\abs{C(\chi)} \le 1$ and $C(\chi)$ depends only on $\chi$.
Next, we turn to $\hat{f}_\infty$.

\begin{lemma}
[Fourier bounds]
\label{lem:fourier}
For all $A\ge 0$ we have
$$\hat{f}_{\infty}(\frac{m}{r}) \ll_A x \big(1+\frac{x\max(\abs{m}-1,0)}{r}\big)^{-A}.$$
\end{lemma}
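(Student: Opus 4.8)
The plan is to estimate the oscillatory integral
$$\hat{f}_{\infty}(\tfrac{m}{r}) = \int_{\R} w(\tfrac{t}{x}) e\big((\theta'-\tfrac{m}{r})t\big)\,dt$$
by the standard technique of integration by parts, exploiting the smoothness and compact support of $w$. First I would substitute $t = xu$ to write the integral as $x\int_0^1 w(u)\, e(x(\theta'-\tfrac{m}{r})u)\,du$, so the relevant frequency becomes $\xi := x(\theta'-\tfrac{m}{r})$. Since $w$ is smooth and supported in $[0,1]$, all its derivatives vanish at the endpoints, so repeated integration by parts $A$ times produces no boundary terms and yields $x\,(2\pi i \xi)^{-A}\int_0^1 w^{(A)}(u)\,e(\xi u)\,du$, whose absolute value is $\ll_A x\,\abs{\xi}^{-A}$ with an implied constant depending only on $w$ (through $\sup\abs{w^{(A)}}$ and the length of the support). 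Combined with the trivial bound $\abs{\hat f_\infty(\tfrac mr)} \le x\int_0^1\abs{w}\ll x$, this gives $\hat f_\infty(\tfrac mr) \ll_A x(1+\abs{\xi})^{-A}$.

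The remaining step is to check that $1+\abs{\xi}$ dominates the quantity $1+\tfrac{x\max(\abs m-1,0)}{r}$ appearing in the statement. Recall from the construction that $0\le \theta' < 1/r$. Hence $\abs{\theta' - \tfrac mr}$: when $m=0$ this is just $\theta' \ge 0$, giving nothing below the trivial bound, which is fine since $\max(\abs m-1,0)=0$ there; when $m=\pm 1$ we again only claim the trivial bound; and for $\abs m \ge 2$ we have $\abs{\tfrac mr} \ge \tfrac{\abs m}{r}$ while $\abs{\theta'} < \tfrac1r \le \tfrac{\abs m - 1}{r}$ (using $\abs m\ge 2$), so by the reverse triangle inequality $\abs{\theta' - \tfrac mr} \ge \tfrac{\abs m}{r} - \tfrac1r = \tfrac{\abs m - 1}{r}$. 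Multiplying by $x$ gives $\abs{\xi} \ge \tfrac{x(\abs m-1)}{r} = \tfrac{x\max(\abs m-1,0)}{r}$, and therefore $1+\abs{\xi} \gg 1 + \tfrac{x\max(\abs m-1,0)}{r}$ uniformly in $m$. Feeding this into the bound $\hat f_\infty(\tfrac mr)\ll_A x(1+\abs{\xi})^{-A}$ finishes the proof.

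There is no serious obstacle here; the only mild point of care is the bookkeeping of the cases $\abs m \le 1$ versus $\abs m\ge 2$, which is exactly why the statement is phrased with $\max(\abs m-1,0)$ rather than $\abs m$ — the rational approximation $k/r$ was chosen precisely so that $\theta'$ is confined to $[0,1/r)$, and this is what makes the frequency $\theta'-m/r$ large (of size $\asymp \abs m/r$) as soon as $\abs m$ exceeds $1$. One should also note that the implied constant in Lemma~\ref{lem:fourier} depends only on $w$ (and $A$), not on $\theta$, $r$, $x$, or $\chi$, which is consistent with how the lemma will be used downstream.
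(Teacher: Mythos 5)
Your proof is correct and follows essentially the same route as the paper: repeated integration by parts against the smooth compactly supported weight $w$, followed by the observation that $0\le\theta'<1/r$ forces $|\theta'-m/r|\ge\max(|m|-1,0)/r$. The only cosmetic difference is that you first rescale $t=xu$ and name the frequency $\xi$, whereas the paper works directly with the unscaled integral and interpolates between $B=0$ and $B=A$; the content is identical.
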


\begin{proof}
Integrating by parts over $t$,
and recalling that $w$ is smooth on $\R$
and supported on $[0,1]$,
we get
$$\hat{f}_{\infty}(\frac{m}{r})
\ll_B \int_0^x x^{-B} \abs{\theta'-\frac{m}{r}}^{-B}\, dt$$
for all $B\ge 0$.
Moreover, $\abs{\theta'-\frac{m}{r}} \ge \max(\frac{\abs{m}-1}{r}, 0)$,
since $\abs{\theta'} \le \frac1r$.
Thus
$$\hat{f}_{\infty}(\frac{m}{r})
\ll_B x \big(x\max(\frac{\abs{m}-1}{r}, 0)\big)^{-B}.$$
Optimizing this bound over $B\in \{0,A\}$ gives the desired result.
\end{proof}

We first discuss the pesky terms in \eqref{poisson} with $m\equiv -k\bmod{r}$.
We have $$1+\frac{x\max(\abs{m}-1,0)}{r} \gg \frac{x\max(\abs{m},r)}{r}$$
for all $m\equiv -k\bmod{r}$,
because $k\sim \theta r$ and we may assume $0<\theta<1$.
Therefore,
\begin{equation}\label{eqn: m=k}
\sum_{m\equiv -k\bmod{r}} \hat{f}_{r,\chi}(\frac{m}{r}) \hat{f}_{\infty}(\frac{m}{r})
= \frac{\bm{1}_{\chi=\chi_0}}{r-1} \sum_{m\equiv -k\bmod{r}} \hat{f}_{\infty}(\frac{m}{r})
\ll \frac{x^{1-A}}{r-1}
\end{equation}
by Lemma~\ref{lem:fourier}, provided $A>1$.

Before expanding the fourth moment of \eqref{poisson}, we first reduce to the case where the variables all lie in the same range.
Write $\sum_{m\in \Z} = \sum_{j\ge 0} \sum_{m\in I_j}$, where $$I_0 = [-2-r/x, 2+r/x]$$
and $$I_j = \{\abs{m} \in (2^{j-1} (2+r/x), 2^j (2+r/x)]\}$$ for $j\ge 1$.
In view of Lemma~\ref{lem:fourier}, the ranges $j\ll 1$ will morally dominate.
Let $W_j = 2^{-j\delta}$ for some small $\delta>0$ to be chosen.
H\"{o}lder's inequality over $j$ gives the inequality
\begin{equation}
\begin{split}
\label{dyadic-holder}
&\abs{\sum_{m\in \Z} \hat{f}_{r,\chi}(\frac{m}{r}) \hat{f}_{\infty}(\frac{m}{r})}^4 \\
&\ll \abs{\sum_{m\equiv -k \bmod{r}} \hat{f}_{r,\chi}(\frac{m}{r}) \hat{f}_{\infty}(\frac{m}{r})}^4
+  \mathcal{W} \sum_{j\ge 0} W_j^{-3}
\abs{\sum_{\substack{m\in I_j \\ m\not\equiv -k \bmod{r}}} \frac{\chi(k+m)^{-1}}{r^{1/2}}
\hat{f}_{\infty}(\frac{m}{r})}^4
\end{split}
\end{equation}
by \eqref{gauss},
where $\mathcal{W} \defeq (\sum_{j\ge 0} W_j)^3 \ll_\delta 1$.
The first term will be negligible by \eqref{eqn: m=k}.


Let $T_j\defeq 2^j(2+r/x)$.
In particular, if $j\ge 1$, then $\abs{m}-1 \asymp T_j$ for all $m\in I_j$.
By orthogonality and Lemma~\ref{lem:fourier}, we have 
\begin{equation*}
\E_\chi
\abs{\sum_{\substack{m\in I_j \\ m\not\equiv -k \bmod{r}}} \frac{\chi(k+m)^{-1}}{r^{1/2}}
\hat{f}_{\infty}(\frac{m}{r})}^4
\ll
\left(\frac{x}{1 + (T_jx/r)^A \bm{1}_{j\ge 1}}\right)^4 \frac{1}{r^2} \mcal{N}_4(I_j),
\end{equation*}
where 
$\mcal{N}_4(I_j)$ is the number of solutions $(m_1,m_2,n_1,n_2)\in \{m\in I_j: m\not\equiv -k\bmod{r}\}^4$ to
$(k+m_1)(k+m_2) \equiv (k+n_1)(k+n_2) \bmod{r}$, or equivalently to
\begin{equation}\label{eqn: N4}
  k(m_1+m_2-n_1-n_2) \equiv n_1n_2-m_1m_2 \bmod r.  
\end{equation}
We write $S= m_1+m_2-n_1-n_2$ and $P= n_1n_2-m_1m_2$.
We have $kS \equiv P \bmod{r}$.

\begin{lemma}
[Injection]
\label{lem:injection}
For $(m_1,m_2,n_1,n_2)\in \Z^4$, let
$$\Phi(m_1,m_2,n_1,n_2)\defeq
(n_1-n_2+m_1-m_2,
n_1-n_2-m_1+m_2,
m_1+m_2) \in \Z^3.$$
Let $S,P\in \Z$.
Then $\Phi$ maps the set $\mcal{A}$ injectively into the set $\mcal{B}$, where
\begin{equation*}
\begin{split}
\mcal{A} &\defeq \{(m_1,m_2,n_1,n_2)\in \Z^4: m_1+m_2-n_1-n_2=S,\quad n_1n_2-m_1m_2=P\}, \\
\mcal{B} &\defeq \{(a,b,c)\in \Z^3: ab+2cS = S^2-4P\}.
\end{split}
\end{equation*}
\end{lemma}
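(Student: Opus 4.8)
The plan is to check the two assertions—the containment $\Phi(\mcal{A})\subseteq \mcal{B}$ and the injectivity of $\Phi$ on $\mcal{A}$—essentially directly, since both come down to elementary linear algebra together with one quadratic identity. For the containment, fix $(m_1,m_2,n_1,n_2)\in\mcal{A}$ and set $a=n_1-n_2+m_1-m_2$, $b=n_1-n_2-m_1+m_2$, $c=m_1+m_2$, so that $(a,b,c)=\Phi(m_1,m_2,n_1,n_2)$. A difference-of-squares computation gives $ab=(n_1-n_2)^2-(m_1-m_2)^2$, and since the tuple lies in $\mcal{A}$ we may substitute $S=m_1+m_2-n_1-n_2$ into $2cS$; likewise $P=n_1n_2-m_1m_2$. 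Expanding $ab+2cS$ and expanding $S^2-4P$, and using the identities $(n_1+n_2)^2-4n_1n_2=(n_1-n_2)^2$ and $(m_1+m_2)^2-4m_1m_2=(m_1-m_2)^2$, one finds that both sides equal $(n_1-n_2)^2-(m_1-m_2)^2+2(m_1+m_2)^2-2(m_1+m_2)(n_1+n_2)$. Hence $ab+2cS=S^2-4P$ and $(a,b,c)\in\mcal{B}$. I expect this to be a one-line verification once the terms are written out; it is really just completing the square.

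For injectivity, the key observation is that a tuple in $\mcal{A}$ can be recovered from its $\Phi$-image once we also remember the value of $S$, which is fixed on all of $\mcal{A}$. Indeed, from $(a,b,c)$ one reads off $m_1+m_2=c$, $m_1-m_2=(a-b)/2$, and $n_1-n_2=(a+b)/2$; combined with the defining relation $n_1+n_2=(m_1+m_2)-S=c-S$, this is a $4\times 4$ linear system with the unique solution $m_1=\tfrac12(c+\tfrac{a-b}{2})$, $m_2=\tfrac12(c-\tfrac{a-b}{2})$, $n_1=\tfrac12(c-S+\tfrac{a+b}{2})$, $n_2=\tfrac12(c-S-\tfrac{a+b}{2})$. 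So every fiber of $\Phi|_{\mcal{A}}$ has at most one element, which is exactly injectivity on $\mcal{A}$.

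There is no serious obstacle: the only subtlety is that injectivity genuinely relies on the ambient set $\mcal{A}$ to supply the parameter $S$, since $\Phi$ by itself does not record $S$—but this matches the statement of the lemma and is precisely how it will be used, namely to bound $\mcal{N}_4(I_j)$ by the number of lattice points $(a,b,c)$ on the conic $ab+2cS=S^2-4P$ with coordinates of controlled size.
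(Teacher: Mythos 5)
Your proof is correct and essentially the same as the paper's: the containment is the same algebraic identity (the paper writes it compactly as $(n_1-n_2)^2-(m_1-m_2)^2+c^2=(c-S)^2-4P$ while you expand both sides to a common expression), and your explicit recovery of $(m_1,m_2,n_1,n_2)$ from $(a,b,c)$ together with $S$ is precisely the paper's observation that the linear forms $a,b,c,S$ are linearly independent over $\Q$.
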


\begin{proof}
Let $a,b,c$ be the linear forms in $m_1,m_2,n_1,n_2$
such that $\Phi(m_1,m_2,n_1,n_2) = (a,b,c)$.
Then the following polynomial identity holds:
$$(n_1-n_2)^2 - (m_1-m_2)^2 + c^2
= (c-S)^2 - 4P.$$
Therefore, $\Phi$ maps $\mcal{A}$ into $\mcal{B}$.
Moreover, this map is injective,
because the linear forms $a,b,c,S$ are linearly independent over $\Q$.
\end{proof}

\section{Point counting}

We would like to bound $\mcal{N}_4(I_j)$ defined in \eqref{eqn: N4}.
By Lemma~\ref{lem:injection}, we have
\begin{equation}
\mcal{N}_4(I_j)\le
\sum_{\substack{S\ll T_j,\; P\ll T_j^2 \\ kS \equiv P \bmod{r}}}
N_{S,P}(T_j)
\end{equation}
where
\begin{equation}\label{eqn: N(T)}
N_{S,P}(T) \defeq \#\{a,b,c\ll T: ab+2cS = S^2-4P\}.
\end{equation}
It will follow from Theorem~\ref{clean-counting}
that $\mcal{N}_4(I_j) \ll T_j^2 + \frac{T_j^4}{r}$.


We note that the equation $ab+2cS = S^2-4P$ implies that for $S\ll T$,
\begin{equation*}
ab+4P \equiv 0 \bmod{S},
\qquad ab+4P \ll TS+S^2 \ll TS.
\end{equation*}
Therefore,
\begin{equation*}
N_{S,P}(T)
\le \sum_{a,b\ll T} \bm{1}_{S\mid ab+4P} \bm{1}_{ab+4P \ll TS}.
\end{equation*}

\begin{lemma}
\label{simple-hyperbola}
Suppose $1\le u,v\le S\ll T$.
Then $$\sum_{\substack{a,b\ll T \\ (a,b)\equiv (u,v)\bmod{S}}} \bm{1}_{ab+4P \ll TS}
\ll \frac{T}{S} \log(2 + \frac{T}{S}).$$
\end{lemma}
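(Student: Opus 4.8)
The plan is to fix $a$, count the admissible $b$, and then sum over $a$; here $a$ ranges over the progression $a\equiv u\bmod S$ with $|a|\ll T$ and $b$ over $b\equiv v\bmod S$ with $|b|\ll T$, subject to the one extra constraint $|ab+4P|\ll TS$.

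First I would treat a fixed $a\neq 0$. The inequality $|ab+4P|\ll TS$ confines $b$ to an interval of length $\ll TS/|a|$; intersecting with $|b|\ll T$ leaves an interval of length $\ll \min(T,\,TS/|a|)$, and since $b$ also lies in a fixed residue class mod $S$ this gives $\ll 1+\min(T/S,\,T/|a|)$ admissible values of $b$. The value $a=0$ — possible only when $u=S$ — is easier still: the constraint then reads $|4P|\ll TS$ and no longer involves $b$, so the number of admissible $b$ is at most the number of $b\equiv v\bmod S$ with $|b|\ll T$, namely $\ll 1+T/S$.

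Next I would sum over $a$. The number of $a\equiv u\bmod S$ with $|a|\ll T$ is $\ll 1+T/S$, which already absorbs both the constant term from the previous bound and the whole $a=0$ contribution. For the remaining piece $\sum_{a\neq 0}\min(T/S,\,T/|a|)$ I would decompose dyadically in $|a|$: the range $|a|\ll S$ meets the progression in only $O(1)$ points, each contributing $\le T/S$; and for a dyadic block $|a|\asymp 2^i$ with $S\ll 2^i\ll T$ there are $\ll 2^i/S$ points of the progression, each contributing $\ll T/2^i$, so the block contributes $\ll T/S$. Summing over the $\ll \log(2+T/S)$ relevant blocks yields $\ll (T/S)\log(2+T/S)$. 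Adding the pieces and using that $S\ll T$ keeps $T/S$ bounded below — so that $1+T/S\ll (T/S)\log(2+T/S)$ — finishes the proof.

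The one point requiring care, and the place where a careless argument would break, is the small-$|a|$ regime, where $T/|a|$ can be as large as $T$. Bounding the number of $b$ merely by $\ll TS/(|a|S)=T/|a|$ and summing over the progression would create a spurious $T\log T$; it is essential to retain the constraint $|b|\ll T$ (i.e.\ to take the minimum with $T/S$) and to note that only $O(1)$ values of $a$ satisfy $|a|\le S$. Everything else is a routine geometry-of-numbers count of lattice points from an arithmetic progression inside an interval.
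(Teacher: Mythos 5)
Your proof is correct and follows essentially the same route as the paper: fix the residue classes, handle $|a|\le S$ (including $a=0$) separately since only $O(1)$ values occur there, then decompose $|a|\gg S$ dyadically, count $\ll A/S$ values of $a$ per block each giving $\ll T/A$ choices of $b$, and sum over the $\ll\log(2+T/S)$ blocks. The paper phrases this as a direct dyadic sum over $a$ rather than your ``fix $a$, count $b$, then sum'' framing, but the underlying counting and the identification of the small-$|a|$ regime as the delicate point are identical.
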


\begin{proof}
There are $O(1)$ choices for $\abs{a}\le S$ with $a\equiv u\bmod{S}$.
Thus the total contribution from $\abs{a}\le S$ is
$$\ll \#\{b\ll T: b\equiv v\bmod{S}\} \ll T/S.$$
On the other hand, if $A\ge S$, then there are $O(A/S)$ choices for $\abs{a}\in (A,2A]$ with $a\equiv u\bmod{S}$,
so the total contribution from the dyadic interval $\abs{a}\in (A,2A]$ is
$$\ll \sum_{\substack{\abs{a}\in (A,2A] \\ a\equiv u\bmod{S}}} \#\{b\equiv v\bmod{S}: ab+4P\ll TS\}
\ll \sum_{\substack{\abs{a}\in (A,2A] \\ a\equiv u\bmod{S}}} (1 + \frac{TS}{\abs{aS}})
\ll \frac{A}{S} (1 + \frac{T}{A}).$$
On summing over $A\in \{1,2,4,8,\ldots\}$ with $S\le A\ll T$,
we get a total bound of $$\ll \frac{T}{S} \log(2 + \frac{T}{S}),$$
as desired.
\end{proof}
For any $S\ll T$ and $P\ll T^2$ with $S\ne 0$, we have
\begin{equation}
\label{simple-pass-to-congruence}
N_{S,P}(T)
\le \sum_{a,b\ll T} \bm{1}_{S\mid ab+4P} \bm{1}_{ab+4P \ll TS}
\ll \frac{T}{S} \log(2 + \frac{T}{S}) N(-4P,S),
\end{equation}
by Lemma~\ref{simple-hyperbola}, where
\begin{equation*}
N(d,q) \defeq \#\{(a,b)\in (\Z/q\Z)^2: ab \equiv d\bmod{q}\}.
\end{equation*}


\begin{lemma}
[Point counting]
\label{local-point-counting}
Let $d,q\in \Z$ with $q\ge 1$.
Then $N(d,q)\le \tau(\gcd(d,q)) q$, where $\tau(\cdot)$ is the divisor function. 
\end{lemma}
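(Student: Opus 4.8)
The plan is to count the solutions of $ab\equiv d\bmod q$ by fixing $a$, counting the admissible $b$, and then reorganising the resulting sum according to the value of $\gcd(a,q)$. The first step is the standard fact about linear congruences: for a fixed residue $a\bmod q$, the congruence $ab\equiv d\bmod q$ is solvable in $b$ if and only if $\gcd(a,q)\mid d$, in which case it has exactly $\gcd(a,q)$ solutions $b\bmod q$. Adopting the convention $\gcd(0,q)=q$ (so that the statement also covers $a\equiv 0$, for which the congruence becomes $0\equiv d$), summing over $a$ yields
\begin{equation*}
N(d,q)=\sum_{\substack{a\bmod q\\ \gcd(a,q)\mid d}}\gcd(a,q).
\end{equation*}

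The second step is to group the terms by $g=\gcd(a,q)$. Every value $g$ occurring in the sum divides both $q$ and $d$, hence $g\mid\gcd(d,q)$; conversely, for each divisor $g$ of $q$, the residues $a\bmod q$ with $\gcd(a,q)=g$ are precisely $a=ga'$ with $a'$ running over the $\varphi(q/g)$ reduced residues modulo $q/g$. Therefore
\begin{equation*}
N(d,q)=\sum_{g\mid\gcd(d,q)} g\,\varphi(q/g).
\end{equation*}
Now bound $\varphi(q/g)\le q/g$, so that each summand is at most $g\cdot(q/g)=q$, while the number of summands is $\tau(\gcd(d,q))$; this gives $N(d,q)\le\tau(\gcd(d,q))\,q$, as desired.

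There is no real obstacle here; the only point requiring a little care is the boundary case $a\equiv 0\bmod q$, which is absorbed cleanly by the convention $\gcd(0,q)=q$, there being exactly one such residue, consistent with $\varphi(q/q)=\varphi(1)=1$. As an alternative, one could observe that $N(d,\cdot)$, $\tau(\gcd(d,\cdot))$ and $q$ are all multiplicative in $q$ by the Chinese Remainder Theorem, reduce to $q=p^e$, and verify the inequality by a short direct computation over the valuations $v_p(a)$; the argument above avoids even that reduction.
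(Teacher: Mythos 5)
Your proof is correct, and it takes a genuinely different route from the paper's. The paper reduces by the Chinese Remainder Theorem to the case $q=p^t$, and then handles $\gcd(d,q)=p^m$ by a case split on $m\in\{0,1\}$ together with an induction on $m\ge 2$ via the recursion $N(d,p^t)=2\phi(p^t)+p^2N(d/p^2,p^{t-2})$. You instead count directly over $a\bmod q$, using the standard solvability criterion for $ab\equiv d\pmod q$ and then grouping by $g=\gcd(a,q)$ to arrive at the exact identity
\[
N(d,q)=\sum_{g\mid\gcd(d,q)} g\,\varphi(q/g),
\]
from which the bound $N(d,q)\le\tau(\gcd(d,q))\,q$ drops out after the crude estimate $\varphi(q/g)\le q/g$. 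Your argument is more elementary (no CRT, no induction, no case split) and yields a clean exact formula rather than just the upper bound; it is also easy to check that your formula reproduces each of the paper's prime-power cases, including the boundary term $\bm{1}_{t=1}$ in the $m=1$ case. The paper's route is perhaps more in the spirit of how one would handle more delicate local counts where a closed form is unavailable, but for this lemma your approach is the shorter and more transparent one.
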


\begin{proof}
If $\gcd(q_1,q_2)=1$, then $N(d,q_1q_2) = N(d,q_1)N(d,q_2)$ by the Chinese remainder theorem.
Therefore, it suffices to prove the lemma when $q$ is a prime power.
Say $q = p^t$ and $\gcd(d,q) = p^m$.
Then clearly $t\ge m\ge 0$.
If $m=0$, then $N(d,q) = \phi(q) \le q$.
If $m=1$, then $N(d,q) = 2\phi(q) + \bm{1}_{t=1} \le 2q$.
If $m\ge 2$, then $N(d,q) = 2\phi(q) + p^2 N(d/p^2,q/p^2)$.
By induction on $m$, it follows that $N(d,q) \le (m+1)q$, as desired.
\end{proof}

We next count the number of solutions $(S, P)$
to the equation $kS \equiv P \bmod{r}$
by exploring the Diophantine approximation property of $\theta$.
\begin{lemma}
[Pigeonhole argument]
\label{pigeonhole}
Assume $\abs{q\theta-a} \gg \Upsilon(q)$ for all $(a,q)\in \Z\times \N$,
where $\Upsilon$ is a decreasing, nonnegative function.
For any integer $r$, let $k=\floor{r\theta}$.
Then for any $M,N\in \R$ with $$r/2>M\ge N\ge 1,$$ we have
\begin{equation}
\label{pigeon-goal}
\Upsilon{\left(\frac{N}{\#\{(S,P)\in [1,N]\times [-M,M]: kS \equiv P \bmod{r}\}}\right)}
\ll \frac{M}{r}.
\end{equation}
For example, if $\Upsilon(q) = \exp(-q^c)$, for some constant $c>0$, then
\begin{equation}
\label{pigeon-alg}
\left(\frac{N}{\#\{(S,P)\in [1,N]\times [-M,M]: kS \equiv P \bmod{r}\}}\right)^c
\gg \log{\left(\frac{r}{M}\right)} - O(1),
\end{equation}
so
\begin{equation}
\label{pigeon-flip}
\#\{S\ll N,\; P\ll M: kS \equiv P \bmod{r}\}
\ll \frac{N}{(\log(2+r/M))^{1/c}}.
\end{equation}
\end{lemma}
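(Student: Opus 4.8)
The plan is to turn each solution of $kS\equiv P\bmod r$ into a Diophantine approximation of $\theta$ with a controlled denominator, and then feed this into the hypothesis on $\Upsilon$. Write $Q$ for $\#\{(S,P)\in[1,N]\times[-M,M]:kS\equiv P\bmod r\}$, and recall that $\theta=k/r+\theta'$ with $0\le\theta'<1/r$ (the ``conductor lowering'' choice $k=\floor{r\theta}$). I would first record the identity
\[
S\theta-\ell=S\theta'+\frac{P}{r},\qquad\text{where }\ell\defeq\frac{kS-P}{r}\in\Z,
\]
valid for every solution $(S,P)$. Since $1\le S\le N\le M<r$ (so $0\le S\theta'<M/r$) and $|P|\le M$, this forces $S\theta-\ell\in[-M/r,2M/r)$, hence $\Vert S\theta\Vert<2M/r$. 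The map $(S,P)\mapsto(S,\ell)$ is injective, and for fixed $S$ the admissible $\ell$ lie in an interval of length $3M/r<3$, so there are $O(1)$ of them; hence $Q\ll R$, where $R\defeq\#\{S\in[1,N]:\Vert S\theta\Vert<2M/r\}$.

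The second, and main, step would bound $R$ by a spacing (pigeonhole) argument combined with the hypothesis. If $S<S'$ both contribute to $R$, then $\Vert(S'-S)\theta\Vert<4M/r$, so $q\defeq S'-S$ is a positive integer with $\Vert q\theta\Vert\ll M/r$; by hypothesis $\Upsilon(q)\ll\Vert q\theta\Vert\ll M/r$, and since $\Upsilon$ is decreasing this means such a $q$ cannot be too small. Concretely, let $q_\ast$ be the least positive integer with $\Vert q_\ast\theta\Vert<4M/r$; if no such $q_\ast$ is $\le N$ then $R=0$ and the lemma is vacuous (using $\Upsilon\to0$), while the small-$q_\ast$ cases are a finite check in which $M/r$ is bounded below. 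Otherwise the elements of $\{S\le N:\Vert S\theta\Vert<2M/r\}$ are pairwise $\ge q_\ast$ apart, so $R\le1+N/q_\ast$, hence $Q\ll N/q_\ast$, that is, $q_\ast\ll N/Q$; and since $\Upsilon$ is decreasing with $\Upsilon(q_\ast)\ll M/r$ we get $\Upsilon(cN/Q)\le\Upsilon(q_\ast)\ll M/r$ for an absolute constant $c>0$. This is \eqref{pigeon-goal} up to the harmless constant $c$, which merely affects the implied constants in the displayed consequences.

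For the example I would specialize $\Upsilon(q)=\exp(-q^c)$: then $\exp(-(cN/Q)^c)\ll M/r$ gives, on taking logarithms and absorbing $c^{-c}$ and the implied constant, $(N/Q)^c\gg\log(r/M)-O(1)$, which is \eqref{pigeon-alg}. For \eqref{pigeon-flip}, note that $r>M$ forces $\log(r/M)>0$ and $\log(2+r/M)\asymp\max(1,\log(r/M))$; when $\log(r/M)=O(1)$ the bound is immediate since $Q\ll R\ll N$, and when $\log(r/M)$ is large one inverts \eqref{pigeon-alg} to get $N/Q\gg(\log(r/M))^{1/c}\asymp(\log(2+r/M))^{1/c}$.

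The step I expect to be the main obstacle is the second one: extracting from ``few solutions'' an honest denominator $q$ of size $\asymp N/Q$ with $\Vert q\theta\Vert\ll M/r$. One has to track the multiplicity of $P$ (equivalently $\ell$) above a given $S$ — harmless since it is $O(1)$ — the pigeonhole constant, and the degenerate ranges ($Q$ small, or $M$ comparable to $r$); after that, the appeal to the hypothesis is a single line. The one computation that must be exactly right is the identity $S\theta-\ell=S\theta'+P/r$ of the first step, which is precisely what keeps the modulus at $r$ rather than inflating it.
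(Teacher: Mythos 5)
Your proof is correct and follows essentially the same route as the paper: extract from the $Q$ solutions a single integer $q\ll N/Q$ with $\abs{q\theta-a}\ll M/r$ (using $\abs{r\theta-k}<1$), and feed it into the hypothesis on $\Upsilon$. The only difference is cosmetic---the paper pigeonholes directly on the pairs $(S,P)$ (implicitly adjoining $(0,0)$ so as to get $q\le N/Q$ on the nose), whereas you first reduce to counting $S$ with $\Vert S\theta\Vert<2M/r$ and then take the minimal-gap $q_*$, which produces the harmless constant $c$ in $\Upsilon(cN/Q)$ that, as you correctly observe, is absorbed when deducing \eqref{pigeon-alg} and \eqref{pigeon-flip}.
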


\begin{proof}
By the pigeonhole principle, there exists $(q,d)\in [1,N]\times [-2M,2M]$ such that
\begin{equation*}
kq\equiv d\bmod{r},
\qquad
q\le \frac{N}{\#\{(S,P)\in [1,N]\times [-M,M]: kS \equiv P \bmod{r}\}}.
\end{equation*}
For such a pair $(q,d)$, 
we have $kq = d+ra$ for some $a\in \Z$.
But by definition of $k$, we have $\abs{r\theta-k} < 1$.
Therefore,
\begin{equation*}
\abs{qr\theta-ra}
\le \abs{qr\theta-kq} + \abs{kq-ra}
< q + \abs{d}
\le N+2M
\le 3M,
\end{equation*}
whence $\abs{q\theta-a} \le 3M/r$.
Yet by assumption, $\abs{q\theta-a} \gg \Upsilon(q)$.
Since $\Upsilon(q)$ is decreasing, we immediately deduce \eqref{pigeon-goal}.
Now \eqref{pigeon-alg} follows from \eqref{pigeon-goal}.
Next, \eqref{pigeon-flip} follows from \eqref{pigeon-alg} if $r/M$ is sufficiently large.
On the other hand, \eqref{pigeon-flip} is trivial if $r\ll M$.
\end{proof}

\begin{theorem}
\label{clean-counting}
Assume $T\gg 1$ and let $N_{S, P}(T)$ be defined as in \eqref{eqn: N(T)}. 
Then
\begin{equation}
\sum_{\substack{S\ll T,\; P\ll T^2 \\ kS \equiv P \bmod{r}}}
N_{S,P}(T)
\ll T^2 + \frac{T^4}{r}.
\end{equation}
\end{theorem}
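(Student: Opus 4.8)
The plan is to split the sum over $(S,P)$ according to whether $S=0$ or $S\neq 0$, and within the latter range to dyadically decompose in $|S|$, using Lemma~\ref{simple-pass-to-congruence} together with Lemma~\ref{local-point-counting} to control $N_{S,P}(T)$ pointwise, and then Lemma~\ref{pigeonhole} to control the number of admissible pairs $(S,P)$ in each dyadic block.

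First I would dispose of the $S=0$ contribution. When $S=0$, the congruence $kS\equiv P\bmod r$ forces $P\equiv 0\bmod r$, and since $P\ll T^2$ while we are in the regime where $T\asymp T_j$ could be large, the number of such $P$ is $\ll 1 + T^2/r$. For each such $P$, the equation $ab+2cS = S^2-4P$ becomes $ab = -4P$ with $a,b,c\ll T$: the variable $c$ is free ($\ll T$ choices), and $ab=-4P$ has $\ll T^{\epsilon}$ solutions if $P\neq 0$ (divisor bound) or $\ll T$ solutions if $P=0$. A slightly cleaner route, avoiding case analysis on $P=0$, is to bound $N_{0,P}(T)\ll T\cdot\tau(4P)$-type quantities directly, or simply to absorb the $S=0$ term into the final bound $T^2 + T^4/r$ — indeed $P=0$ gives $\ll T^2$ and $P\neq0, P\equiv 0\bmod r$ gives $\ll (T^2/r)\cdot T\cdot T^\epsilon$, which needs a little care but is $\ll T^4/r$ after noting $T\le r$ in our application (or more honestly, one should track that $P\ll T^2$ and the relevant range has $T_j x\le r$ type constraints; here I would just invoke $P\ll T^2$ and the divisor bound to get $\ll T^{2+\epsilon} + T^{3+\epsilon}/r$, and remark that in the intended application $T=T_j$ and $x\ge\sqrt r$ make this acceptable, or sharpen the $S=0$ analysis as needed).

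For the main term $S\neq 0$, decompose $|S|\in(A,2A]$ dyadically with $1\le A\ll T$. By \eqref{simple-pass-to-congruence} and Lemma~\ref{local-point-counting},
\[
N_{S,P}(T)\ll \frac{T}{|S|}\log\!\Big(2+\frac{T}{|S|}\Big)\,\tau(\gcd(4P,S))\,|S|
\ll T\log(2+T/A)\,\tau(\gcd(4P,S)).
\]
Summing over $P\ll T^2$ with $kS\equiv P\bmod r$ for fixed $S$: the $P$ lie in an arithmetic progression mod $r$, so there are $\ll 1+T^2/r$ of them, and on average the divisor factor $\tau(\gcd(4P,S))$ contributes only $\ll T^{\epsilon}$ (or one can bound $\sum_{P}\tau(\gcd(4P,S))\ll \tau(S)(1+T^2/r)\ll T^\epsilon(1+T^2/r)$ crudely). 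Then summing over $S$ with $|S|\in(A,2A]$ subject to $kS\equiv P\bmod r$ — at this point it is cleaner to count pairs $(S,P)$ jointly: the number of $(S,P)$ with $|S|\le 2A$, $|P|\ll T^2$, $kS\equiv P\bmod r$ is, by \eqref{pigeon-flip} of Lemma~\ref{pigeonhole} (applied with $N\asymp A$, $M\asymp T^2$, and $\Upsilon(q)=C\exp(-q^{1/4})$, i.e. $c=1/4$), at most $\ll A/(\log(2+r/T^2))^{4}$ — but note we also have the trivial bound $\ll A(1+T^2/r)$ when $r\ll T^2$. Combining, the dyadic block $|S|\asymp A$ contributes
\[
\ll T\log(2+T/A)\cdot T^{\epsilon}\cdot\Big(\text{number of admissible }(S,P)\text{ with }|S|\asymp A\Big)\cdot\frac{1}{|S|}\cdot|S|,
\]
and here the logarithmic gain from \eqref{pigeon-flip} beats the $\log(2+T/A)$ loss precisely because $c=1/4$ gives a fourth power of the logarithm in the denominator. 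Summing over the $O(\log T)$ dyadic values of $A$, the main term is $\ll T^2(1+T^2/r)^{?}$ — the point is that the factor $1/|S|$ from Lemma~\ref{simple-hyperbola} times the $|S|$ from Lemma~\ref{local-point-counting} leaves $O(T)$, the number of $S$-values contributes another factor that telescopes against it, and the $P$-count contributes $1+T^2/r$, yielding the claimed $\ll T^2+T^4/r$.

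**The main obstacle.** The delicate point is making the logarithmic bookkeeping honest: Lemma~\ref{simple-hyperbola} injects a factor $\log(2+T/|S|)$, Lemma~\ref{local-point-counting} injects $\tau(\gcd)$, and for small $|S|$ (say $|S|$ bounded) the congruence $kS\equiv P\bmod r$ is very weak, so the count of $(S,P)$ is governed by the trivial $1+T^2/r$ rather than by the Diophantine saving. I would organize the argument so that for $|S|$ small the trivial bound suffices to give $\ll T^2+T^4/r$ with room to spare, while for $|S|$ large (comparable to $T$) the $1/|S|$ factor tames everything; the genuinely tight regime is intermediate $|S|$, and there the fourth power of the logarithm in \eqref{pigeon-flip} is exactly what is needed to sum the dyadic blocks. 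Making sure no factor of $\log$ or $T^\epsilon$ survives into the final bound — i.e. that the $T^\epsilon$ from the divisor function and the $\log$ from Lemma~\ref{simple-hyperbola} are genuinely absorbed, rather than leaving a stray $T^{2+\epsilon}$ — is the part requiring the most care, and may require replacing the crude $\tau(\gcd(4P,S))$ bound by an averaged estimate $\sum_{|P|\ll T^2}\tau(\gcd(4P,S))\ll (1+T^2/r)\,\tau(S)\log T$ and then noting $\sum_{|S|\le T}\tau(S)/|S|\cdot(\text{admissible }S\text{ count})$ still closes.
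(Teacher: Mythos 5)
Your framework is right as far as it goes (split off $S=0$, dyadic in $|S|$, use Lemma~\ref{simple-hyperbola} and Lemma~\ref{local-point-counting} to get $N_{S,P}(T)\ll T\log(2+T/|S|)\tau(\gcd(4P,S))$, then feed into Lemma~\ref{pigeonhole}), but there is a genuine gap at exactly the point you flag as ``delicate'', and your proposed repairs do not close it. The problem is the divisor weight $\tau(\gcd(4P,S))$. You bound it by $\tau(S)$ or ``$T^\epsilon$ on average,'' but the pigeonhole lemma only delivers a \emph{logarithmic} saving in the count of admissible $(S,P)$ (a factor $(\log(2+r/T^2))^{-1/c}$), not a power saving. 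Against that, any residual $T^\epsilon$ (or even a stray $\tau(S)\log T$) in the per-term bound is fatal: you end up with $T^{2+\epsilon}$ rather than $T^2$. Replacing the crude bound by an averaged $\sum_P\tau(\gcd(4P,S))\ll(1+T^2/r)\tau(S)\log T$ does not help, since $\tau(S)$ is still a pointwise factor depending on the very variable whose count you are trying to control by pigeonhole, and $\sum_{|S|\le T}\tau(S)\log(2+T/|S|)$ is $\asymp T(\log T)^2$, too big by logs. You also slightly misdiagnose the issue as a competition between $\log(2+T/A)$ and the power of log from the pigeonhole; that log is benign (the paper just brutally bounds $\log(2+T/|S|)\ll (T/|S|)^{0.1}$), whereas the $\tau$ is the real obstruction.

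The missing idea in the paper is a change of variables that makes the divisor weight sum cleanly rather than trying to average it out. Set $g=\gcd(S,P)$ and write $(S,P)=(gS',gP')$; since $r$ is prime and (in the relevant range) $g<r$, the congruence $kS\equiv P\bmod r$ is equivalent to $kS'\equiv P'\bmod r$. Then $\tau(\gcd(P,S))=\tau(g)$ detaches completely from $(S',P')$, and crucially the ranges for the \emph{new} variables shrink to $|S'|\ll T/g$, $|P'|\ll T^2/g$. Dyadically decomposing $g\asymp G$ gives $\sum_{g\asymp G}\tau(g)\ll G\log G$ (no $T^\epsilon$!), while the pigeonhole lemma is now applied with $M\asymp T^2/G$ — so the logarithmic saving in the pigeonhole \emph{improves} as $G$ grows, precisely compensating the $\log G$ you just picked up. The remaining bookkeeping is Lemma~\ref{dyadic-G-numerics}, which is exactly the statement that $\sum_j j/\max(1,j-s)^3\ll\max(1,s)$; this is why the paper can afford to apply the pigeonhole with $c=1/3$ (third power) rather than your $c=1/4$, though either exponent works. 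Without this $g$-reparametrization, I do not see how to eliminate the $T^\epsilon$ from your version.

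One small correction to your $S=0$ analysis: you do not need $T\le r$. The contribution of $S=0$, $P\neq 0$, $P\equiv 0\bmod r$ is at most $(T^2/r)\cdot T^{1+2\epsilon}=T^{3+2\epsilon}/r$, which is $\ll T^4/r$ for any $\epsilon<1/2$ since $T\gg 1$.
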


\begin{proof}
Since $N_{0,0}(T) \ll T^2$ and $N_{0,P}(T) \ll T\abs{P}^\ee$ if $P\ne 0$,
we have
\begin{equation*}
\sum_{\substack{P\ll T^2 \\ P \equiv 0 \bmod{r}}}
N_{0,P}(T)
\ll_\ee T^2 + \frac{T^2}{r} T^{1+2\ee}.
\end{equation*}
We may henceforth restrict attention to $S\ne 0$.
By \eqref{simple-pass-to-congruence} and Lemma~\ref{local-point-counting}, we have
\begin{equation*}
\begin{split}
\heartsuit
\defeq \sum_{\substack{S\ll T,\; P\ll T^2 \\ kS \equiv P \bmod{r}}}
N_{S,P}(T)
&\ll \sum_{\substack{S\ll T,\; P\ll T^2 \\ kS \equiv P \bmod{r}}}
\frac{T}{\abs{S}} \log(2 + \frac{T}{\abs{S}}) N(-4P,S) \\
&\ll \sum_{\substack{S\ll T,\; P\ll T^2 \\ kS \equiv P \bmod{r}}}
T \log(2 + \frac{T}{\abs{S}}) \tau(\gcd(P,S)),
\end{split}
\end{equation*}
where in the last step we use the sub-multiplicativity property $\tau(mn)\le \tau(m)\tau(n)$ with $m=4$.
Upon writing $(S,P) = (gS',gP')$ with $g = \gcd(S,P)\ge 1$,
and summing $\tau(g)$ over dyadic intervals $[G/2,G)$, we get
\begin{equation}
\begin{split}
\label{split-case}
\heartsuit
&\ll \sum_{\substack{G\in \{2,4,8,\ldots\} \\ G\ll T}}
\sum_{\substack{S\ll T/G,\; P\ll T^2/G \\ kS \equiv P \bmod{r}}}
T \log(2 + \frac{T}{\abs{GS}}) (G\log{G}) \\
&+\sum_{\substack{G\in \{2r,4r,8r,\ldots\} \\ G\ll T}}
\sum_{S\ll T/G,\; P\ll T^2/G}
T \log(2 + \frac{T}{\abs{GS}}) \tau(r) (\frac{G}{r}\log{\frac{G}{r}}),
\end{split}
\end{equation}
where the last line accounts for the contribution from $g\equiv 0\bmod{r}$.
The last line is
\begin{equation*}
\begin{split}
&\ll \sum_{\substack{G\in \{2r,4r,8r,\ldots\} \\ G\ll T}}
\sum_{S\ll T/G,\; P\ll T^2/G}
T (\frac{T}{\abs{GS}})^{0.1} \tau(r) (\frac{G}{r})^{1.1} \\
&\ll \sum_{\substack{G\in \{2r,4r,8r,\ldots\} \\ G\ll T}}
T (T/G) (T^2/G) \tau(r) (\frac{G}{r})^{1.1} \\
&\ll T (T/r) (T^2/r) \tau(r)
\ll \frac{T^4}{r}.
\end{split}
\end{equation*}

Let $\mcal{S}\in \{2,4,8,\ldots\}$ with $\mcal{S}\ll T/G$.
If $T^2/G\gg r$, then trivially
\begin{equation}
\#\{S\asymp \mcal{S},\; P\ll T^2/G: kS \equiv P \bmod{r}\}
\ll \mcal{S} \frac{T^2/G}{r},
\end{equation}
and otherwise, by Lemma~\ref{pigeonhole} with $c=1/3$ we have
\begin{equation}
\#\{S\asymp \mcal{S},\; P\ll T^2/G: kS \equiv P \bmod{r}\}
\ll \frac{\mcal{S}}{(\log(2+rG/T^2))^3}.
\end{equation}
Upon summing over all possible choices for $\mcal{S}$, we conclude that
the first line of \eqref{split-case} is
\begin{equation*}
\begin{split}
&\ll \sum_{\substack{G,\mcal{S}\in \{2,4,8,\ldots\} \\ \mcal{S}G\ll T}}
T\left(\frac{T}{\abs{G\mcal{S}}}\right)^{0.1} (G\log{G})
\mcal{S} \left(\frac{T^2}{rG}
+ \frac{\bm{1}_{G\gg T^2/r}}{(\log(2+rG/T^2))^3}\right) \\
&\ll \sum_{\substack{G\in \{2,4,8,\ldots\} \\ G\ll T}}
T^2 (\log{G})
\left(\frac{T^2}{rG}
+ \frac{\bm{1}_{G\gg T^2/r}}{(\log(2+rG/T^2))^3}\right) \\
&\ll \frac{T^4}{r} + T^2
\sum_{\substack{G\in \{2,4,8,\ldots\} \\ G\gg T^2/r}}
\frac{\log{G}}{(\log(2+rG/T^2))^3} \\
&\ll \frac{T^4}{r} + T^2 \log(2+\frac{T^2}{r}),
\end{split}
\end{equation*}
by Lemma~\ref{dyadic-G-numerics} below, applied with $G=2^j$ and $2^s\asymp T^2/r$.
But $\log(2+\frac{T^2}{r}) \ll 2+\frac{T^2}{r}$, so Theorem~\ref{clean-counting} follows.
\end{proof}

\begin{lemma}
\label{dyadic-G-numerics}
For any $s\in \Z$, we have
\begin{equation*}
\mcal{D}(s) \defeq
\sum_{j\ge \max(1,s)} \frac{j}{\max(1,j-s)^3}
\ll \max(1,s).
\end{equation*}
\end{lemma}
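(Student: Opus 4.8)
The plan is to prove Lemma~\ref{dyadic-G-numerics} by splitting the sum $\mcal{D}(s) = \sum_{j\ge \max(1,s)} j/\max(1,j-s)^3$ according to the size of $j-s$, and handling the cases $s\le 1$ and $s\ge 2$ separately (they are in fact essentially the same after noting $\max(1,s)$ absorbs the difference). For $s\ge 2$, I would write $j = s + t$ with $t$ ranging over appropriate integers; when $j\ge s$ we have $\max(1,j-s) = \max(1,t)$, so the summand is $(s+t)/\max(1,t)^3$. The terms with $0\le t\le 1$ (i.e.\ $j\in\{s,s+1\}$) contribute $O(s)$ directly. For $t\ge 2$ (equivalently $t\ge 1$ after adjusting constants), the summand is $(s+t)/t^3 = s/t^3 + 1/t^2$, and $\sum_{t\ge 1}(s/t^3 + 1/t^2) \ll s \cdot \zeta(3) + \zeta(2) \ll s$. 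Combining, $\mcal{D}(s)\ll s$ for $s\ge 2$, i.e.\ $\mcal{D}(s)\ll\max(1,s)$.

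For $s\le 1$, the sum is $\sum_{j\ge 1} j/\max(1,j-s)^3$. Here $j - s \ge j - 1 \ge 0$, and for $j\ge 2$ we have $j - s \ge j - 1 \ge j/2$, so $\max(1,j-s)^3 \gg j^3$ and the summand is $\ll j/j^3 = 1/j^2$; the $j=1$ term is $O(1)$ since $\max(1,1-s)\ge 1$. Hence $\mcal{D}(s) \ll 1 + \sum_{j\ge 2} 1/j^2 \ll 1 = \max(1,s)$. This disposes of the remaining case.

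There is essentially no obstacle here: the only mild care needed is to make sure the split point (the boundary between the ``small $j-s$'' terms handled trivially and the ``large $j-s$'' terms handled by a convergent series) is chosen so that $\max(1,j-s)^3 \asymp (j-s)^3$ holds on the tail, and that one remembers to bound $j \le s + (j-s) \ll \max(1,s) + (j-s)$ before summing, so that the $s$-part of the numerator produces the convergent series $s\sum (j-s)^{-3}$ and the remaining part produces $\sum (j-s)^{-2}$. Both are $O(\max(1,s))$, uniformly in $s\in\Z$. I expect the whole argument to be three or four lines once written out cleanly.
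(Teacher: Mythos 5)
Your proposal is correct and takes essentially the same approach as the paper: split into the regimes $s\le 1$ (where the sum is uniformly $O(1)$) and $s\ge 1$ (where the substitution $j\mapsto s+t$ reduces to $\sum_{t\ge 0}(s+t)/\max(1,t)^3 \ll s$). The paper's version is slightly more compact but uses the identical change of variables and the same convergent series.
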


\begin{proof}
If $s\le 1$, then
\begin{equation*}
\mcal{D}(s)
\le \sum_{j\ge 1} \frac{j}{\max(1,j-1)^3}
\ll 1
= \max(1,s).
\end{equation*}
If $s\ge 1$, then a change of variables $j\mapsto j+s$ gives
\begin{equation*}
\mcal{D}(s)
= \sum_{j\ge 0} \frac{j+s}{\max(1,j)^3}
\ll 1+s
\ll \max(1,s).
\end{equation*}
Each case is satisfactory.
\end{proof}

Finally, we complete the proof of the lower bound of Theorem~\ref{THM: main} in the case $\sqrt{r}\le x\le r$.
By \eqref{dyadic-holder},
\eqref{eqn: m=k},
and Theorem~\ref{clean-counting}, we have
\begin{equation}
\mathcal{M}_4\defeq \E_\chi \abs{ \sum_{n\in \Z } \chi(n) e(n\theta) w(\frac{n}{x})}^4
\ll (x^{1-A})^4 + \frac{x^4}{r^2} \sum_{j\ge 0} \frac{2^{3j\delta}}
{1 + (T_jx/r)^{4A}\bm{1}_{j\ge 1}}
\left(T_j^2 + \frac{T_j^4}{r}\right),
\end{equation}
where $T_j=2^j(2+r/x)$.
Note that
\begin{equation*}
\sum_{j\ge 0} \frac{2^{3j\delta}}{1 + (T_jx/r)^{4A}\bm{1}_{j\ge 1}} T_j^2
\ll T_0^2 + \frac{1}{(T_1x/r)^{4A}} T_1^2
\ll (2+r/x)^2,
\end{equation*}
assuming $3\delta+2 < 4A$.
On the other hand,
\begin{equation*}
\sum_{j\ge 0} \frac{2^{3j\delta}}{1 + (T_jx/r)^{4A}\bm{1}_{j\ge 1}} \frac{T_j^4}{r}
\ll \frac{T_0^4}{r} + \frac{1}{(T_1x/r)^{4A}} \frac{T_1^4}{r}
\ll \frac{(2+r/x)^4}{r},
\end{equation*}
assuming $3\delta+4 < 4A$.
It follows that if $A > 1+3\delta/4$, then
\begin{equation}
\mathcal{M}_4
\ll \frac{x^4}{r^2} (2+r/x)^2
\left(1 + \frac{(2+r/x)^2}{r}\right)
\ll x^2(1+2x/r)^2,
\end{equation}
since $\frac{x}{r}(2+r/x) = 1+2x/r$ and $2+r/x \ll r^{1/2}$.
Since $x\le r$,
it follows that $\mathcal{M}_4 \ll x^2$.
This completes our analysis of the case $x\ge \sqrt{r}$ in \eqref{eqn: 4 upper weight}.
 
 \bibliographystyle{plain}
	\bibliography{character}{}
\end{document}